\theoremstyle{plain}
\newtheorem{theorem}{Theorem}
\newtheorem{corollary}[theorem]{Corollary}
\newtheorem{proposition}[theorem]{Proposition}
\newtheorem{lemma}[theorem]{Lemma}
\theoremstyle{definition}
\newtheorem{definition}{Definition}
\newtheorem{question}{Question}
\theoremstyle{remark}
\newtheorem*{remark}{Remark}
\newcommand{\ex}{\text{\sffamily X}}
\newcommand{\F}{\mathbb F}
\newcommand{\Fpb}{\overline{\mathbb F}_p}
\newcommand{\cO}{\mathcal O}
\newcommand{\A}{\mathbb A}
\renewcommand{\P}{\mathbb P}
\newcommand{\Q}{\mathbb Q}
\newcommand{\bp}{\mathbf p}
\newcommand{\bq}{\mathbf q}
\newcommand{\br}{\mathbf r}
\newcommand{\rat}{\dashrightarrow}
\newcommand{\abs}[1]{\left\lvert #1 \right\rvert}
\newcommand{\set}[1]{\left\{ #1 \right\}}
\def\blfootnote{\gdef\@thefnmark{}\@footnotetext}
\DeclareMathOperator{\Bl}{Bl}
\DeclareMathOperator{\Pic}{Pic}
\DeclareMathOperator{\Aut}{Aut}
\title{Tri-Coble Surfaces and their Automorphisms}
\author{John Lesieutre}
\begin{document}

\begin{abstract}
We construct some positive entropy automorphisms of rational surfaces with no periodic curves.
The surfaces in question, which we term \emph{tri-Coble surfaces}, are blow-ups of \(\P^2\) at \(12\) points which have contractions down to three different Coble surfaces.  The automorphisms arise as compositions of lifts of Bertini involutions from certain degree \(1\) weak del Pezzo surfaces.
\end{abstract}

\maketitle

\section{Introduction}

Suppose that \(X\) is a projective surface over an algebraically closed field \(K\) and that \(\phi : X \to X\) is an automorphism of \(X\).
When \(K = \mathbb C\), a theorem of Gromov and Yomdin asserts that \(\phi\) has positive topological entropy if and only if the spectral radius of \(\phi^\ast : N^1(X) \to N^1(X)\) is greater than \(1\), where \(N^1(X)\) denotes  the (finite dimensional) real vector space of divisors on \(X\) modulo numerical equivalence.
In a mild abuse of notation, for an arbitrary algebraically closed field \(K\) we will say that an automorphism \(\phi : X \to X\) has positive entropy if \(\phi^\ast : N^1(X) \to N^1(X)\) has spectral radius greater than \(1\).

Rational surfaces have proved to be an especially compelling source of examples of such automorphisms: although we do not attempt to provide an exhaustive bibliography, some representative constructions can be found in~\cite{bkfirst,coble,mcmullen,uehara}.
McMullen asked whether any rational surface \(X\) admitting a positive entropy automorphism must have a pluri-anticanonical curve, i.e.\ a curve belonging to some linear system \(\abs{-mK_X}\)~\cite[Question, pg.\ 87]{mcmullen}.  Since any automorphism \(f : X \to X\) preserves these linear systems, if one of them is nonempty then \(f\) must have invariant curves.

Bedford and Kim gave an elegant construction answering this question in the negative~\cite{bknocurves}.  Considering the family of birational maps \(f_{a,b} : \P^2 \rat \P^2\) defined in affine coordinates by \[(x,y) \mapsto \left( y, \frac{y+a}{x+b} \right),\] they show that by carefully choosing values for the parameters \(a\) and \(b\) and passing to a suitable blow-up, one obtains an automorphism of a rational surface with no periodic curves at all. 

In this note, we exhibit new positive entropy automorphisms of rational surfaces with no periodic curves.  We hope that these examples may still be of interest, as they have some new features: the examples are easily understood geometrically, exist in a positive dimensional family, and can be defined over the rational numbers. 

The strategy underlying the construction is straightforward.  Suppose that \(\bp = \set{p_1,\ldots,p_r}\) and \(\bq = \set{q_1,\ldots,q_s}\) are two configurations of points in \(\P^2\), and let \(S_{\bp}\) and \(S_{\bq}\)  denote the corresponding blow-ups.
Suppose too that both these surfaces admit nontrivial automorphisms, say \(\phi_\bp : S_\bp \to S_\bp\) and \(\phi_\bq : S_\bq \to S_\bq\).  If \(\phi_\bp\) fixes every point of \(\bq \setminus \bp\), and \(\phi_\bq\) fixes every point of \(\bp \setminus \bq\), then both automorphisms \(\phi_\bp\) and \(\phi_\bq\) lift to automorphisms of the common resolution \(S_{\bp  \bq}\).  Even if \(\phi_\bp\) and \(\phi_\bq\) each has an invariant curve, there is no reason  to expect that the composition \(\phi_\bp \circ \phi_\bq\) will fix either of these curves, let alone any other.

The difficulty lies in finding such configurations: for two automorphisms to each fix the base points of the other requires that these configurations be quite special (at least over \(\mathbb C\); we observe in \S\ref{poschar} that finding such configurations of points over the fields \(\Fpb\) is essentially trivial).

We ultimately employ this approach using not two but three sets of points, \(\bp\), \(\bq\), and \(\br\).  These will all be \(8\)-tuples, but with six points common to all three.  Each of the three blow-ups \(S_\bp\), \(S_\bq\), and \(S_\br\) is a weak del Pezzo surface of degree \(1\), and the three configurations are chosen so that the corresponding Bertini involutions all lift to the common model \(S_{\bp\bq\br}\).  Although the composition of any two of the involutions has invariant curves, we show that the composition of all three has none.

Before delving into the details, we give a quick description of the automorphism.  First, we construct the rational surface \(X\):

\begin{enumerate}
\item Choose three smooth quadrics \(Q_1\), \(Q_2\), and \(Q_3\) in \(\P^3\) so that any pair \(Q_i\) and \(Q_j\) are tangent at two points.  This determines three pairs of points, \(\bp\), \(\bq\), and \(\br\).  (Such configurations can easily be visualized; see Figure~\ref{quadricsfig}.)
\item Choose a cubic surface \(S \subset \P^3\) which passes through all six points, and is tangent to both of the quadrics passing through each.  Such  cubics surely exist, as there is a \(19\)-dimensional family of cubics and the tangency requirements impose only \(6 \times 3 = 18\) conditions.
\item Let \(X\) be the blow-up of \(S\) at the six points of tangency.
\end{enumerate}

\begin{figure}[hbt]
\begin{center}
  \includegraphics[scale=0.2]{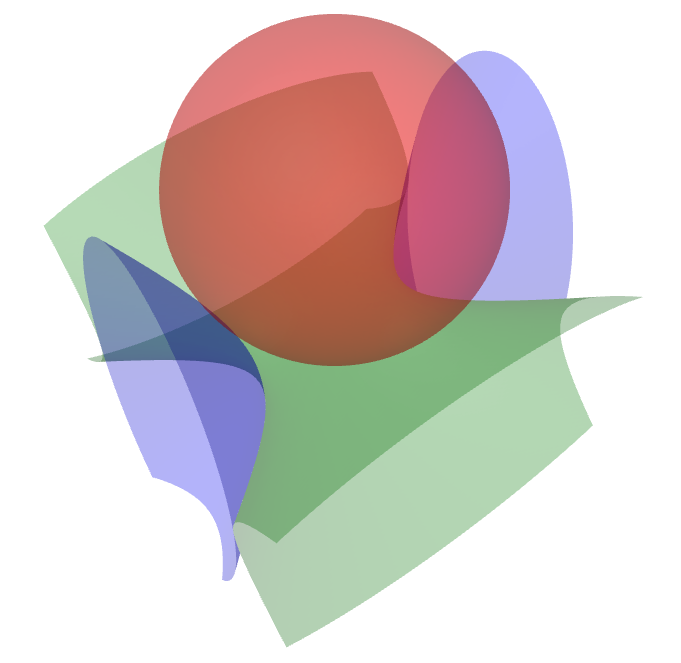}
\end{center}
\caption{A configuration of three tangent quadrics.   In this case, two of the quadrics are tangent along an entire curve.}
\label{quadricsfig}
\end{figure}

Now, to the three pairs \(\bp\), \(\bq\), and \(\br\), we associate involutions \(\tau_\bp\), \(\tau_\bq\), and \(\tau_\br\) as follows.

\begin{enumerate}
\setcounter{enumi}{3}
\item Given a general point \(z\) on \(S\), let \(\Pi \subset \P^3\) denote the plane through \(z\) and the two points of \(\bp = \set{p_1,p_2}\).
\item The intersection \(C = S \cap \Pi\) is a smooth genus \(1\) curve in \(\Pi\), passing through \(p_1\), \(p_2\), and \(z\).
\item There exists a unique conic \(\gamma \subset \Pi\) which is tangent to \(C\) at \(p_1\) and \(p_2\) and passes through \(z\).
\item \(\tau_\bp(z)\) is defined to be the residual sixth point of intersection of \(\gamma\) with \(C\).
\item The maps \(\tau_\bq\) and \(\tau_\br\) are defined analogously.
\end{enumerate}

Our main result is then:
\begin{theorem}
Let \(S\) be a cubic surface containing three pairs of points \(\bp\), \(\bq\), and \(\br\) as above.  Let \(X = S_{\bp\bq\br}\) be the blow-up of the cubic at \(6\) points.  For general parameter choices, the maps \(\tau_\bp\), \(\tau_\bq\), and \(\tau_\br\) all lift to biregular involutions of \(X\), and the composition \(\tau_\bp \circ \tau_\bq \circ \tau_\br\) is an automorphism of positive entropy which has no periodic curves.
\end{theorem}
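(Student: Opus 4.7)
I would begin by recognizing each involution $\tau_\bp$ (and analogously $\tau_\bq$, $\tau_\br$) as the Bertini involution on the degree-$1$ weak del Pezzo surface $Y_\bp := \Bl_\bp S$. Because $S$ is itself the blow-up of $\P^2$ at six points, $(-K_{Y_\bp})^2 = 3 - 2 = 1$, and the geometric recipe spelled out in steps (4)--(7) is precisely the classical description of the Bertini involution as the covering involution of the anticanonical double cover $Y_\bp \to \bar{Q}$ onto a quadric cone. To promote $\tau_\bp$ to a biregular involution of $X = S_{\bp\bq\br}$, I would check that each of $q_1, q_2, r_1, r_2$ lies on the fixed locus of this Bertini involution; since that locus is a smooth curve of genus $4$ together with the unique base point of $|-K_{Y_\bp}|$, this reduces to an explicit local computation. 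Here is where the tangency of $Q_\bq$ and $Q_\br$ to $S$ at the points of $\bq$ and $\br$ enters: the tangency is precisely the condition that these four points lie on the ramification curve of the anticanonical double cover on $Y_\bp$, and hence are fixed by the Bertini involution. The lift of $\tau_\bp$ to $X$ is then automatic.

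Next, I would compute the action on $N^1(X) \cong \mathbb{Z}^{1,12}$. In the basis coming from the presentation $X = \Bl_{12}\P^2$, each $\tau_\bullet^*$ acts as $-1$ on the $E_8$-sublattice $K_{Y_\bullet}^\perp \subset N^1(Y_\bullet) \subset N^1(X)$ associated to its del Pezzo, and trivially on the classes of the four exceptional divisors coming from the other two pairs. Assembling the three matrices and multiplying, a direct calculation shows that $f^* := \tau_\bp^* \tau_\bq^* \tau_\br^*$ has spectral radius $\lambda > 1$, which yields positive entropy in the sense of the introduction.

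The main difficulty is then showing that $f$ has no periodic curves. Any periodic irreducible curve $C$ yields a class $[C] \in N^1(X)$ fixed by some iterate of $f^*$, and necessarily $[C]^2 \le 0$ because the $\lambda$-eigenvector of $f^*$ is irrational and spans the only positive direction in the positive cone. The periodic sublattice is the sum of the eigenspaces of $f^*$ for eigenvalues that are roots of unity; assuming, as numerics should confirm, that this sublattice has small rank, I would enumerate candidate effective classes using adjunction $C^2 + C \cdot K_X = 2 p_a(C) - 2$ and the condition $-K_X \cdot C \ge 0$, obtaining a finite list. For each class on the list, the existence of an irreducible curve with that class on $X$ would impose an explicit additional incidence condition on the twelve points defining $X$; a dimension count in the moduli of configurations satisfying steps (1)--(3) shows that generic configurations satisfy none of these extra conditions. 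Closing this step rigorously is the subtlest part of the argument, and the positive-dimensional family of examples obtained in the construction is what makes the genericity approach plausible.
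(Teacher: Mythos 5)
Your overall skeleton (lift the three Bertini involutions, compute the spectral radius of the composition, then rule out periodic curves) matches the paper's structure, and the identification of the tangency conditions with the ramification locus of the bi-anticanonical double cover is essentially right — though the paper establishes this more carefully through an equivalence (its Theorem on two-pair blow-ups being Coble iff the involutions mutually fix the base points, built on an explicit lemma about four conics gluing to a quadric). Where your plan diverges, and where it has a genuine gap, is in the ``no periodic curves'' step.

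You propose to study classes fixed by \emph{some iterate} of \(f^\ast\), i.e.\ the sum of all root-of-unity eigenspaces, and then enumerate candidate effective classes in that sublattice, ``assuming, as numerics should confirm, that this sublattice has small rank.'' That assumption is false: the characteristic polynomial turns out to be \((t-1)(t+1)^{10}(t^2-110t+1)\), so the periodic sublattice has rank \(11\) out of \(13\). An enumeration of effective classes in an \(11\)-dimensional negative-definite lattice is not at all a small finite computation, and the genericity/moduli-dimension-count you sketch to finish is vague at precisely the point where the argument needs to be sharp. You have also not committed to a mechanism for ruling out the enumerated classes; ``generic configurations satisfy none of these conditions'' is an expectation, not a proof, and the paper explicitly needs more than generic position (its tri-Coble configurations are themselves nongeneric).

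The paper avoids all of this with a short reduction you missed: if \(C\) is \(\phi\)-periodic, then \(\bigcup_n \phi^n(C)\) is \(\phi\)-\emph{invariant}, so it suffices to rule out classes fixed by \(\phi^\ast\) itself, i.e.\ the genuine \(1\)-eigenspace. That eigenspace is \(1\)-dimensional, spanned by \(K_X\). Since on a rational surface \(mK_X\) is never effective for \(m>0\), the only danger is \(-mK_X\) being effective, and the paper kills this with a pseudoeffectivity argument: one of the three Coble contractions \(S_{\bp\bq}\) carries a unique, smooth, rigid \((-4)\)-curve representing \(-2K_{S_{\bp\bq}}\), and after blowing up two more points, \(-2K_X = \pi^\ast(-2K_{S_{\bp\bq}}) - 2(E_1+E_2)\) cannot even be pseudoeffective (a small lemma about subtracting from a class with a negative, rigid, irreducible representative). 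In short: your step three should be replaced by the orbit-union reduction plus the rigidity of the bi-anticanonical \((-4)\)-curve on the intermediate Coble surface; the enumeration-over-the-periodic-sublattice route both misestimates the size of the problem and lacks a closing argument.
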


\section{Coble surfaces and the Bertini involution}

We begin by recalling some classical geometry surrounding weak del Pezzo surfaces of degree \(1\), the Bertini involution, and Coble rational surfaces.  A reference for most of the results in this section is \cite[Chapter 8]{dolgachevclassical}.

A weak del Pezzo surface of degree \(k\) is a surface for which \(-K_S\) is big and nef and \((-K_S)^2 = k\); such surfaces exist for \(1 \leq k \leq 9\).  Blowing up \(9-k\) general points in \(\P^2\) yields examples, with \(-K_S\) ample; the weaker big and nef condition allows points in mildly degenerate configurations, for example configurations with \(3\) collinear points but which are otherwise general. These surfaces were classically the subject of intense study, and we recall two facts.

\begin{proposition}[{\cite[8.3.2]{dolgachevclassical}}]
Suppose that \(S\) is a weak del Pezzo surface of degree \(1\).
Then
\begin{enumerate}
\item \(\abs{-K_S}\) is a pencil of genus \(1\)  curves with one basepoint and smooth general member;
\item \(\abs{-2K_S}\) is \(4\)-dimensional and basepoint-free, and the \(2\)-anticanonical map \(\phi_{\abs{-2K_S}} : S \to \P^3\) is generically 2-to-1, with image a quadric cone. 
\end{enumerate}
The Bertini involution \(\tau : S \to S\) is defined to be the covering involution associated to \(\phi_{\abs{-2K_S}}\), which extends to a biregular map.
\end{proposition}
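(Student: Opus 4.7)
My plan is to derive (1) and (2) from Riemann–Roch combined with Kawamata–Viehweg vanishing, then identify the image of \(\phi_{\abs{-2K_S}}\) from the structure of the anticanonical ring, and finally argue biregularity of the covering involution using finiteness and normality.

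For (1), since \(-mK_S\) is big and nef for every \(m \geq 1\), Kawamata–Viehweg vanishing applied to \(-K_S = K_S + (-2K_S)\) gives \(h^i(-K_S) = 0\) for \(i > 0\). Combined with \(\chi(\cO_S) = 1\) (Noether's formula, using \(K_S^2 = 1\) and \(c_2(S) = 11\)) and Riemann–Roch, this yields \(h^0(-K_S) = 1 + K_S^2 = 2\), so \(\abs{-K_S}\) is a pencil. Two general members meet in exactly \((-K_S)^2 = 1\) point; a fixed component would have to lie in the span of \((-2)\)-curves (since \(-K_S\) is nef), but that sublattice is negative definite, and an intersection-theoretic check rules out any such fixed part. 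Hence the base locus is a single reduced point \(p\). Adjunction gives \(K_C = (K_S + C)|_C \equiv 0\) on any \(C \in \abs{-K_S}\), so \(p_a(C) = 1\); smoothness of the general member then follows by blowing up \(p\) to obtain a basepoint-free pencil on the blow-up and invoking Bertini.

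For (2), the same vanishing and Riemann–Roch yield \(h^0(-2K_S) = 4\), consistent with a map to \(\P^3\). For basepoint-freeness, fix a smooth \(C \in \abs{-K_S}\) and consider the exact sequence
\[
0 \to \cO_S(-K_S) \to \cO_S(-2K_S) \to \cO_C((-2K_S)|_C) \to 0;
\]
surjectivity on \(H^0\) follows from \(h^1(-K_S) = 0\), and \((-2K_S)|_C\) is a non-special degree-\(2\) line bundle on the elliptic curve \(C\), hence basepoint-free. Since every point of \(S\) lies on some smooth member of the pencil (and \(p\) lies on every member), \(\abs{-2K_S}\) is globally basepoint-free. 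To pin down the image, fix a basis \(x_0, x_1\) of \(H^0(-K_S)\) and complete \(x_0^2, x_0 x_1, x_1^2\) to a basis of \(H^0(-2K_S)\) with some section \(z\). In the dual homogeneous coordinates \((V_0{:}V_1{:}V_2{:}V_3)\) on \(\P^3\), the image lies on the rank-\(3\) quadric \(V_0 V_2 = V_1^2\), which is a quadric cone with vertex \((0{:}0{:}0{:}1)\) attained at \(p\) (where \(x_0, x_1\) vanish but \(z\) does not). The identity \((-2K_S)^2 = 4 = (\deg \phi)(\deg \text{image})\) with \(\deg \text{image} = 2\) then forces \(\deg \phi = 2\).

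The covering involution \(\tau\) is initially defined on the étale locus of \(\phi\). Since \(\phi\) is a finite morphism from the smooth surface \(S\) to the normal quadric cone, \(\tau\) extends uniquely to a regular involution of \(S\) by Zariski's main theorem applied to the normalization of the fiber product. The main subtlety I anticipate is precisely this last step in the weak (as opposed to ample) setting: the \((-2)\)-curves on \(S\) are contracted by \(\phi\) to singularities of the branch locus, and one must verify that \(\tau\) fixes each such contracted \((-2)\)-curve setwise, which follows from the uniqueness of the minimal resolution of the corresponding ADE singularity on the quotient.
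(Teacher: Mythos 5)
The paper does not prove this proposition; it is quoted verbatim from Dolgachev (8.3.2), so there is no internal argument to compare against. Your overall architecture (Riemann--Roch plus vanishing for the dimension counts, restriction to an anticanonical curve for basepoint-freeness, the relation \(x_0x_1\cdot x_0x_1 = x_0^2\cdot x_1^2\) for the quadric cone, and degree bookkeeping via \((-2K_S)^2=4\)) is the standard one. Two steps, however, are genuinely wrong as written. First, the claim that ``every point of \(S\) lies on some smooth member of the pencil'' is false: a point \(z\neq p\) lies on exactly one member of \(\abs{-K_S}\), and the points of the (always nonempty) singular members lie on no smooth member. Your restriction argument therefore only shows freeness along the chosen smooth \(C\), in particular at \(p\). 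The repair is easy but different in shape: for \(z \neq p\) pick \(C_1, C_2 \in \abs{-K_S}\) neither passing through \(z\); then \(C_1 + C_2 \in \abs{-2K_S}\) misses \(z\), so the base locus is contained in \(\set{p}\), and the restriction argument at \(p\) finishes.

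Second, and more seriously, \(\phi_{\abs{-2K_S}}\) is \emph{not} a finite morphism when \(S\) is weak but not genuinely del Pezzo: it contracts every \((-2)\)-curve (these have degree zero against \(-2K_S\)), which your own final sentence concedes, contradicting the premise of your appeal to Zariski's main theorem. The correct route is to factor \(\phi\) through the anticanonical model \(S \to \bar S \to Q\), where \(\bar S\) is normal with du Val singularities and \(\bar S \to Q\) is finite of degree \(2\); the covering involution is then the Galois involution of the degree-\(2\) (separable, in characteristic \(\neq 2\)) extension \(K(\bar S)/K(Q)\) acting on the normalization \(\bar S\), and it lifts to \(S\) because \(S \to \bar S\) is the minimal resolution and minimal resolutions of surfaces are functorial under automorphisms. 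You gesture at exactly this in your last sentence, but the argument as structured does not get there: ``finiteness of \(\phi\)'' cannot be the starting point. A final caveat: you invoke Kawamata--Viehweg, which ties the proof to characteristic \(0\), whereas the paper uses the proposition over arbitrary algebraically closed fields (including \(\Fpb\) in \S 5); for rational surfaces the needed vanishings \(h^1(-mK_S)=h^2(-mK_S)=0\) can be obtained by characteristic-free arguments, and that substitution should be made explicit.
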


The Bertini involution admits a simple description in terms of the pencil \(\abs{-K_S}\).  Given a general point \(z \in S\), there is a unique smooth genus \(1\) curve \(C \in \abs{-K_S}\) passing through \(z\).  Since \(-2K_S \cdot C = 2\), there is a unique point \(z'\) on \(C\) for which \((-2K_S) \vert_{C} \otimes \cO_{C}(-z-z')\)
is trivial in \(\Pic^0(C)\), and so every element of \(\abs{-2K_S}\) passing through \(z\) also passes through \(z'\). This \(z'\) is the image of \(z\) under the Bertini involution.  As a result, we get a convenient characterization of the fixed points of Bertini involution:

\begin{lemma}
\label{bertinifixed}
Suppose that \(S\) is a weak del Pezzo surface of degree \(1\) and that \(z\) is a point which lies on a smooth curve \(C \in \abs{-K_S}\).  Then \(z\) is fixed by \(\tau\) if and only if
\((-2K_S)\vert_{C} \otimes \cO_{C}(-2z) = 0\) in \(\Pic^0(C)\).
\end{lemma}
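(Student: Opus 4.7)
The lemma is essentially a direct corollary of the description of $\tau$ given in the paragraph preceding the statement, so the plan is mostly to unpack that description carefully and to worry about one small subtlety.

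First I would restate what the preceding paragraph establishes: for a point $z$ lying on a smooth anticanonical curve $C$, the image $\tau(z)$ is characterized as the unique point $z' \in C$ satisfying
\[
(-2K_S)\vert_{C} \otimes \cO_C(-z-z') = 0 \in \Pic^0(C).
\]
Given this, the two directions of the lemma become nearly formal. If $\tau(z) = z$, then taking $z' = z$ in the characterization gives $(-2K_S)\vert_C \otimes \cO_C(-2z) = 0$ directly. Conversely, if $(-2K_S)\vert_C \otimes \cO_C(-2z) = 0$, then $z$ itself satisfies the defining property of $\tau(z)$, so by uniqueness $\tau(z) = z$.

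The uniqueness of $z'$ should be justified briefly: both $\cO_C(z + z')$ and $(-2K_S)\vert_C$ have degree $2$ on the smooth genus $1$ curve $C$, and for a fixed $z$ the map $w \mapsto \cO_C(z + w)$ is injective on points of $C$ (since two distinct effective degree $1$ divisors on an elliptic curve are not linearly equivalent). This ensures that the point $z'$ produced by the condition is unique, so that the equivalence of the two characterizations is sharp.

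The only genuinely delicate point — and the one I expect to need the most care — is that the description from the preceding paragraph was stated only for \emph{general} $z \in S$, whereas the lemma asserts the equivalence for every $z$ lying on a smooth anticanonical curve. To upgrade to this setting I would use that $\tau$ is biregular on $S$ (from the cited proposition), and that the assignment $z \mapsto z'$ defined by the Abel-Jacobi condition varies regularly as $z$ moves along a fixed smooth $C \in \abs{-K_S}$. Both maps $C \to C$ agree on an open subset of $C$ and are regular, so they agree everywhere on $C$; thus the characterization of $\tau(z)$ as the residue under the linear equivalence applies to every point of every smooth member of $\abs{-K_S}$, and the argument above goes through unchanged.
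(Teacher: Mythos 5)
Your proof is correct and matches the paper's intended argument: the paper states the lemma without a separate proof, treating it as immediate from the characterization of $\tau(z)$ as the unique $z' \in C$ with $(-2K_S)\vert_C \otimes \cO_C(-z-z') = 0$ established in the preceding paragraph, and your write-up simply unpacks that equivalence together with the uniqueness on a degree-$2$ linear system over a genus-$1$ curve. Your extra caution about upgrading from ``general $z \in S$'' to all $z$ on a smooth $C$ is reasonable but not strictly needed: the generality in the paper's discussion serves only to ensure that a unique smooth anticanonical curve passes through $z$, and once $C$ is given smooth and containing $z$ (as the lemma hypothesizes), the restriction of $\abs{-2K_S}$ to $C$ is the complete degree-$2$ system and the identification of $\tau(z)$ with $z'$ holds at every point of $C$, including the base point of $\abs{-K_S}$, so no density/continuity step is required — though the regularity argument you sketch (both $\tau\vert_C$ and the Abel--Jacobi assignment are morphisms $C \to C$ agreeing off the base point) is a perfectly valid alternative way to finish.
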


A degree \(1\) del Pezzo can be obtained by blowing up \(2\) suitable points on a cubic surface, and it is easy to characterize when such two-point blow-ups are weak del Pezzo:

\begin{lemma}
Suppose that \(S \subset \P^3\) is a smooth cubic surface and that \(S_\bp = S\) is the blow-up of \(S\) at two smooth points \(\bp = \set{p_1,p_2}\).  Then \(S_\bp\) is a weak del Pezzo surface if and only if the line between \(p_1\) and \(p_2\) is not contained in \(S\).
\end{lemma}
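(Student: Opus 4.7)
The plan is to reduce the weak del Pezzo property to a nefness check for the anticanonical class. Let $\pi : S_\bp \to S$ denote the blowup, with exceptional divisors $E_1, E_2$, and let $H$ denote the hyperplane class on $S$ (so $-K_S = H$). Then $-K_{S_\bp} = \pi^\ast H - E_1 - E_2$, and in particular $(-K_{S_\bp})^2 = 3 - 2 = 1$. On a smooth projective surface, any nef divisor with positive self-intersection is automatically big (by asymptotic Riemann-Roch, using that $h^2(X, mD) = h^0(X, K_X - mD) = 0$ for large $m$ whenever $D$ is nef with $D^2 > 0$), so the weak del Pezzo condition collapses to the single question of whether $-K_{S_\bp}$ is nef.

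To test nefness I would intersect $-K_{S_\bp}$ with every irreducible curve on $S_\bp$. The exceptional divisors pose no issue, as $(-K_{S_\bp}) \cdot E_i = 1 > 0$. Every other irreducible curve is the strict transform $\tilde C$ of an irreducible curve $C \subset S$; writing $d = C \cdot H$ for the projective degree and $m_i$ for the multiplicity of $C$ at $p_i$, a direct computation gives
\[
(-K_{S_\bp}) \cdot \tilde C = d - m_1 - m_2.
\]
So nefness reduces to the numerical assertion that $m_1 + m_2 \leq d$ for every irreducible $C \subset S$.

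One direction is immediate: if the line $L$ joining $p_1$ and $p_2$ lies on $S$, then $C = L$ has $d = 1$ and $m_1 = m_2 = 1$, violating the inequality. For the converse, assume $L \not\subset S$ and let $C \subset S$ be an arbitrary irreducible curve. Then $C \neq L$ (since $L \not\subset S$), and by irreducibility of $C$ this forces $L \not\subset C$. I would then bound the length of the scheme-theoretic intersection $C \cap L$ from both sides. On one hand, at each $p_i$ the local intersection number of the smooth curve $L$ with the curve $C$ of multiplicity $m_i$ is at least $m_i$, so $\text{length}(C \cap L) \geq m_1 + m_2$. On the other hand, choosing a plane $\Pi \subset \P^3$ containing $L$ but not containing $C$, one has $C \cap L \subset C \cap \Pi$, and $C \cap \Pi$ is a zero-cycle of degree $d$ on $\Pi \cong \P^2$; so B\'ezout gives $\text{length}(C \cap L) \leq d$. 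Combining yields $m_1 + m_2 \leq d$.

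The main subtlety is ensuring the B\'ezout step applies: one needs a plane $\Pi \supset L$ not containing $C$, which exists unless $C$ is itself planar, contained in some plane $\Pi_0$. If $\Pi_0 \supset L$, one applies classical plane B\'ezout to $C$ and $L$ directly inside $\Pi_0$. If $\Pi_0 \not\supset L$, then $L \cap \Pi_0$ is a single point, so at most one of the $p_i$ can lie on $C$, and the bound $m_1 + m_2 \leq d$ holds trivially since the multiplicity of a plane curve of degree $d$ at any point is at most $d$.
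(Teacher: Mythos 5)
Your proof is correct, but it takes a genuinely different route from the paper's. The paper works in \(\Bl_\bp \P^3 \to \P^3\) and observes that the pencil \(\abs{H - E_1 - E_2}\), i.e.\ hyperplanes through the line \(L = \overline{p_1 p_2}\), has base locus exactly the strict transform \(\tilde L\); hence \(H - E_1 - E_2\) meets every irreducible curve non-negatively except \(\tilde L\), so its restriction to \(S\) is nef precisely when \(L \not\subset S\). You instead verify nefness directly by testing against every irreducible curve on \(S\): computing \((-K_{S_\bp}) \cdot \tilde C = d - m_1 - m_2\) and then proving the multiplicity inequality \(m_1 + m_2 \leq d\) for every irreducible \(C \neq L\) by B\'ezout together with a case analysis on whether \(C\) is planar and whether its plane contains \(L\). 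Both arguments ultimately come down to that same inequality; the paper's version is terser because it appeals to the linear system's base locus, while yours is more elementary and self-contained. One small refinement worth making: the lower bound \(\operatorname{length}_{p_i}(C \cap L) \geq m_i\) for a space curve \(C\) is a bit awkward to cite directly, and the argument is cleaner if you replace \(L\) by the plane \(\Pi\) itself. The standard fact that a Cartier divisor through a point where a curve has multiplicity \(m\) meets it with local length at least \(m\) gives \(\operatorname{length}_{p_i}(C \cap \Pi) \geq m_i\), and since \(\operatorname{length}(C \cap \Pi) = d\) this yields \(m_1 + m_2 \leq d\) without ever passing through \(C \cap L\).
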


\begin{proof}
Let \(E_1\) and \(E_2\) be the exceptional divisors of \(\pi : \Bl_\bp \P^3 \to \P^3\), and let \(H = \pi^\ast \cO_{\P^3}(1)\).  Then \(-K_S = (H -  E_1 - E_2) \vert_S\).  The divisor \(H - E_1 -E_2\) on \(\Bl_\bp \P^3\) is not nef, since it has intersection \(-1\) with the strict transform of the line through \(p_1\) and \(p_2\).  However, this is the only curve with which it has negative intersection, and so \((H-E_1-E_2)\vert_S\) is nef as long as this line is not contained in \(S\).   Since \((-K_S)^2 = 1\), we conclude that \(-K_S\) is big.
\end{proof}

Note that if \(S\) is the blow-up of a cubic in \(\P^3\) at two points, \(\abs{-2K_S}\) consists of quadric surfaces which are tangent to the cubic at each blown up point.  This gives rise to a convenient description of the Bertini involution on such blow-ups~\cite[pg.\ 128]{baker}.  Given a point \(z \in S\), there is \(3\)-dimensional family of quadric surfaces which are tangent to \(S\) at the two points of \(\bp\) and pass though the point \(z\).  This web of quadrics has a unique basepoint on \(S\), which coincides with \(\tau(z)\).
According to Lemma~\ref{bertinifixed}, a point \(z\) is fixed by \(\tau\) if in the plane \(\Pi = \Pi_{p_1 p_2 z}\), there exists a plane conic \(C \subset \Pi\)  which is tangent to \(S \cap \Pi\) at the three points \(p_1\), \(p_2\), and \(z\).

Weak del Pezzo surfaces never admit automorphisms of positive entropy, and it is necessary to look at rational surfaces obtained by blowing up additional points.  Central to our analysis are the \emph{Coble surfaces}.

\begin{definition}
A \emph{Coble surface} \(S\) is a smooth rational surface for which \(\abs{-K_S}\) is empty but \(\abs{-2K_S}\) is not.  A \emph{simple Coble surface} is a Coble surface for which \(\abs{-2K_S}\) is represented by a smooth rational curve.
\end{definition}

An application of the adjunction theorem shows that on a simple Coble surface, the rational curve \(C \in \abs{-2K_S}\) has self-intersection \(-4\). Such \(S\) can be obtained, for example, by blowing up the nodes of an irreducible, rational plane sextic with exactly ten nodes~\cite{coble}.

As in the del Pezzo case, rather than viewing a Coble surface as the blow-up of \(\P^2\) at a set of \(10\) points, we may regard it as a blow-up of a cubic surface at \(4\) special points.  Indeed, suppose that \(S\) is a cubic surface and that \(\bp\) is a quadruple of points on \(S\).  If there exists a conic \(Q\) which is tangent to \(S\) at the points \(\bp\) and such that \(Q \cap S\) is smooth, then \(S_\bp = \Bl_{\bp} S\) is a simple Coble surface, with \(Q \cap S \in \abs{-2K_{S_\bp}}\).

To carry out the strategy outlined in the induction, we would need a cubic surface \(S\) with two pairs of points \(\bp = \set{p_1,p_2}\) and \(\bq = \set{q_1,q_2}\) such that \(\tau_\bp\) fixes \(\bq\) and \(\tau_\bq\) fixes \(\bp\).
If this is the case, then both \(\tau_\bp\) and \(\tau_\bq\) lift to the blow-up \(S_{\bp\bq}\).
One might expect that \(\tau_\bp \circ \tau_\bq\) has no invariant curves, but the next result shows that this is too much to hope for.

\begin{theorem}
\label{blowupiscoble}
Suppose that \(S\) is a weak del Pezzo surface of degree \(3\)
and that \(\bp\) and \(\bq\) are two disjoint pairs of points on \(S\) such that \(S_\bp\) and \(S_\bq\) are both weak del Pezzo surfaces of degree \(1\).  Suppose further that two non-degeneracy conditions are satisfied:
\begin{enumerate}
\item[(N1)] The element of \(\abs{-K_S}\) through any three of the four points is smooth and irreducible, and these curves are distinct.
\item[(N2)] The tangent plane to \(S\) at any of the four points does not pass through any other.
\end{enumerate}
Then \(S_{\bp \bq}\) is a Coble surface if and only if
\(\tau_\bp\) fixes each point of \(\bq\) and \(\tau_\bq\) fixes each point of \(\bp\).
\end{theorem}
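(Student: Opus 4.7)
First I would show that $\abs{-K_{S_{\bp\bq}}}$ is automatically empty. Sections of $-K_{S_{\bp\bq}}$ correspond to hyperplane sections of $S$ passing through all four of $p_1, p_2, q_1, q_2$, but such a hyperplane would force the four plane cubics $C_{ijk}$ (through three of the four points) to coincide, contradicting the distinctness asserted in N1. Therefore $S_{\bp\bq}$ is Coble precisely when $\abs{-2K_{S_{\bp\bq}}} \neq \emptyset$, and elements of this linear system correspond to quadric surfaces $Q \subset \P^3$ tangent to the cubic $S$ at each of the four points.

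For the forward direction, given such a tangent quadric $Q$, I restrict $Q$ to each of the four planes $\Pi = \Pi_{ijk}$ to obtain a conic $Q \cap \Pi$. Hypothesis N2 guarantees that $\Pi$ is transverse to the tangent plane $T_p S$ at each of its three distinguished points, so the $\P^3$-tangency of $Q$ with $S$ at $p$ descends to a $\Pi$-tangency of $Q \cap \Pi$ with $S \cap \Pi$ at $p$. A Bezout count (a conic and a cubic in $\Pi$ meet in six points, and three double tangencies account for all of them) ensures $Q \cap \Pi$ is precisely the tangent conic provided by the geometric characterization of Bertini fixed points recalled after Lemma~\ref{bertinifixed}, which witnesses the corresponding Bertini fixing.

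For the reverse direction, the plan is to use the four tangent conics to construct $Q$. The key dimension count runs on $\abs{-2K_{S_\bp}} \cong \P^3$, whose elements are quadrics tangent to $S$ at $p_1, p_2$: imposing a further tangency at a point $q_i$ is generically codimension three, but an analysis of the double cover $\phi_\bp \colon S_\bp \to Q_{\text{cone}}$ shows the condition drops to codimension two precisely when $q_i$ lies on the ramification curve, that is, when $\tau_\bp$ fixes $q_i$ (pulling back a linear form to such a ramification point kills the derivative along the branch direction). The two $\tau_\bp$-fixings thus cut out a pair of $\P^1$'s in $\abs{-2K_{S_\bp}}$; their expected intersection is empty, so one must leverage the remaining two $\tau_\bq$-fixings to force them to meet.

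The main obstacle is this last step. My preferred approach is via a restriction-and-connecting-homomorphism argument on $S_{\bp\bq}$: let $\tilde{C}_i$ denote the strict transform of the plane cubic $C_{p_1 p_2 q_i}$. The hypothesis that $\tau_\bp$ fixes $q_i$ together with Lemma~\ref{bertinifixed} identifies $(-2K_{S_{\bp\bq}})|_{\tilde{C}_i}$ with the trivial bundle. Restricting to $\tilde{C}_1 \cup \tilde{C}_2$ then controls $H^0(-2K_{S_{\bp\bq}})$ via a connecting map into $H^1\bigl(\cO_{S_{\bp\bq}}(-E_{q_1} - E_{q_2})\bigr) \cong \mathbb{C}$; one must verify this connecting map vanishes under the additional hypothesis that $\tau_\bq$ fixes $p_1, p_2$, which I would attempt by matching the analogous $\bq$-side restriction sequence via the $\bp \leftrightarrow \bq$ symmetry.
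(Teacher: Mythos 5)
Your forward direction matches the paper's: restrict the quadric $Q$ to the plane $\Pi$ through $p_1,p_2,q_i$ and invoke the plane-conic characterization of Bertini fixed points. The observation that $\abs{-K_{S_{\bp\bq}}}$ is empty because of (N1) is also correct and parallels the paper.

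The reverse direction is where you diverge, and where there is a genuine gap. The paper's route is entirely explicit: it shows that each of the four triples of points in $\bp\cup\bq$ gives a \emph{smooth} plane conic tangent to $S\cap\Pi$ at the three points (ruling out degenerate conics using (N1) and (N2) — a step you omit, and which does require work), and then proves a concrete Lemma~\ref{quadrics} which, by a coefficient-matching computation on the four conic equations, produces a single quadric $Q\subset\P^3$ restricting to all four conics. Nothing beyond linear algebra in the quadratic monomials $X_iX_j$ is needed. Your proposed alternative is to do a dimension count inside $\abs{-2K_{S_\bp}}\cong\P^3$: your observation that imposing order-two vanishing at a $\tau_\bp$-fixed point drops from the generic codimension three to codimension two (because the map to the quadric cone is ramified there, killing one Taylor coefficient automatically) is correct and a nice reformulation. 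But as you yourself note, two codimension-two loci in $\P^3$ generically miss each other, so the entire content of the theorem is concentrated in showing they meet. Your plan — control $H^0(-2K_{S_{\bp\bq}})$ by restricting to $\tilde C_1\cup\tilde C_2$ and argue that the connecting map into $H^1\bigl(\cO(-E_{q_1}-E_{q_2})\bigr)$ vanishes ``by $\bp\leftrightarrow\bq$ symmetry'' — is only a hope, not an argument. The symmetry supplies an analogous exact sequence with $\bp$ and $\bq$ exchanged, but gives no a priori comparison between the two connecting homomorphisms and no reason either one should be zero; the hypothesis that $\tau_\bq$ fixes $\bp$ has to enter in some concrete way, and the proposal does not say how. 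This is exactly the work Lemma~\ref{quadrics} does in the paper, and it is the missing crux here. Until that step is supplied, the reverse implication is unproven.
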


Before giving the proof, we record a simple geometric lemma.

\begin{lemma}
\label{quadrics}
Suppose that \(p_0,p_1,p_2,p_3\) are four non-coplanar points in \(\P^3\), and for \(0 \leq i \leq 3\), let \(\Pi_i\) denote the plane passing through the three points other than \(p_i\).
Suppose that there exist four smooth conics \(C_i \subset \Pi_i\) such that:
\begin{enumerate}
\item Each \(C_i\) passes through the three points \(p_j\) which lie on \(\Pi_i\);
\item At the point \(p_i\), we have \(\dim(T_{p_i} C_j + T_{p_i} C_k + T_{p_i} C_\ell) = 2\).  
\end{enumerate}
Then there exists a quadric \(Q \subset \P^3\) so that \(C_i = Q \cap \Pi_i\).
\end{lemma}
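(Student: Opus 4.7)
The plan is to produce $Q$ in two stages: first build a pencil of quadrics containing $C_0 \cup C_1$, then use the tangent hypothesis at $p_3$ to pin down the unique member of this pencil which also contains $C_2$, and finally verify that this member automatically contains $C_3$ thanks to the coplanarity hypotheses at the remaining three points.

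A quick dimension count shows that the space of quadrics $Q$ with $Q \cap \Pi_i = C_i$ for $i = 0, 1$ is a pencil $\mathcal{P}$: each such condition cuts out a $5$-dimensional subspace of the $10$-dimensional space of quadrics, and the two subspaces meet in a $2$-dimensional linear subspace, projectivizing to $\P^1$. The reducible member $\Pi_0 \cup \Pi_1$ lies in $\mathcal{P}$ and restricts on $\Pi_2$ to the nodal conic $(\Pi_0 \cap \Pi_2) \cup (\Pi_1 \cap \Pi_2)$, which has its node at $p_3$. Restriction therefore embeds $\mathcal{P}$ as a line in the $\P^2$ of conics through $p_0, p_1, p_3$ in $\Pi_2$, and the presence of a member with a node at $p_3$ forces $p_3$ to be a double base point of this pencil: every smooth member has the same tangent line at $p_3$. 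For any smooth $Q \in \mathcal{P}$, the plane $T_{p_3} Q$ contains both $T_{p_3} C_0$ and $T_{p_3} C_1$ and so equals their span; the common tangent at $p_3$ in $\Pi_2$ is therefore $\operatorname{span}(T_{p_3} C_0, T_{p_3} C_1) \cap \Pi_2$.

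Hypothesis~(2) at $p_3$ places $T_{p_3} C_2$ inside $\operatorname{span}(T_{p_3} C_0, T_{p_3} C_1)$, and combined with $T_{p_3} C_2 \subset \Pi_2$ this forces $T_{p_3} C_2$ to equal the prescribed common tangent. Hence $C_2$ lies on the image line of $\mathcal{P}$, and there is a unique $Q_* \in \mathcal{P}$ with $Q_* \cap \Pi_2 = C_2$. To conclude I verify $Q_* \cap \Pi_3 = C_3$. At each of $p_0, p_1, p_2 \in \Pi_3$, two of the three conics $C_0, C_1, C_2$ pass through the point, and both lie on $Q_*$, so $T_{p_i} Q_*$ equals the span of their two tangents at $p_i$. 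Hypothesis~(2) at $p_i$ then places $T_{p_i} C_3$ inside this plane, and intersecting with $\Pi_3$ shows that the conics $Q_* \cap \Pi_3$ and $C_3$ share a tangent at $p_i$. A plane conic is determined by three points together with the tangent lines at two of them, so $Q_* \cap \Pi_3 = C_3$, and $Q = Q_*$ is the desired quadric.

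The main obstacle is not the structural argument but keeping track of the non-degeneracy assumptions: that the two tangent lines in each span above are genuinely distinct (so the span is a $2$-plane), that no member of $\mathcal{P}$ contains $\Pi_2$ or $\Pi_3$ (so restriction is injective on the pencil), and that $Q_*$ is not tangent to $\Pi_3$ along $\Pi_3$. Each of these should follow from the smoothness of the $C_i$, the non-coplanarity of $p_0, \ldots, p_3$, and hypothesis~(2) itself.
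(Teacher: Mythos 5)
Your proposal is correct, and it takes a genuinely different route from the paper's. The paper works in explicit coordinates: it normalizes the four points to the standard coordinate points of \(\P^3\), writes out each \(F_i\) as a sum of three monomials, translates the coplanar-tangent hypothesis into four bilinear relations among the nine coefficients, and shows directly that the \(F_i\) can be rescaled to agree pairwise, at which point the quadric \(Q\) is assembled by hand. Your argument is synthetic: you build the pencil \(\mathcal{P}\) of quadrics matching \(C_0\) and \(C_1\), use the reducible member \(\Pi_0 \cup \Pi_1\) and the tangent-span hypothesis at \(p_3\) to locate \(C_2\) on the restricted pencil, and then close with a B\'ezout argument at \(p_0, p_1, p_2\) to force \(Q_\ast \cap \Pi_3 = C_3\). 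The paper's computation has the advantage of being completely self-contained and mechanical; yours has the advantage of making the geometric content visible — in particular, it isolates exactly where each tangency hypothesis gets used, and it makes clear that the fourth conic comes ``for free'' once three are matched.

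Two small remarks. First, the sentence ``Restriction \emph{therefore} embeds \(\mathcal{P}\) as a line\ldots'' is not justified by what precedes it: exhibiting one member with nonzero restriction does not by itself show the restriction map is injective on the pencil. The correct argument is that a member \(Q \in \mathcal{P}\) with \(Q|_{\Pi_2} = 0\) would contain \(\Pi_2\), hence be a union of two planes, hence restrict on \(\Pi_0\) to a union of lines rather than the smooth conic \(C_0\); the only reducible member is \(\Pi_0 \cup \Pi_1\), which visibly does not contain \(\Pi_2\). Second, you are right that the various non-degeneracy claims you flag at the end all follow from smoothness of the \(C_i\) and non-coplanarity of the points: e.g.\ \(T_{p_3} C_0 \ne T_{p_3} C_1\) since equality would force both to be the line \(\Pi_0 \cap \Pi_1 = \overline{p_2 p_3}\), and a smooth conic through \(p_2, p_3\) tangent to \(\overline{p_2 p_3}\) at \(p_3\) would contain that line by B\'ezout; similar arguments handle the remaining items, including ruling out \(\Pi_3 \subset Q_\ast\) and ruling out a degenerate \(Q_\ast \cap \Pi_3\). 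So the gaps you identify are genuinely fillable, but they should be filled to complete the proof.
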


\begin{proof}
We may choose coordinates \([X_0,X_1,X_2,X_3]\) on \(\P^3\) so that the points are the four standard coordinate points. Then the plane \(\Pi_i\) is defined by \(X_i = 0\).

That the conics each pass through three of these points and lie in a plane \(X_i = 0\) means that they are given by equations of the form
\begin{align*}
F_0 &= a_{12} X_1 X_2 + a_{13} X_1 X_3 + a_{23} X_2 X_3 = 0, \\
F_1 &= b_{02} X_0 X_2 + b_{03} X_0 X_3 + b_{23} X_2 X_3 = 0, \\
F_2 &= c_{01} X_0 X_1 + c_{03} X_0 X_3 + c_{13} X_1 X_3 = 0, \\
F_3 &= d_{01} X_0 X_1 + d_{02} X_0 X_2 + d_{12} X_1 X_2 = 0
\end{align*}

To show that there exists a quadric \(Q\) as claimed, we must show that we may replace each \(F_i\) by a suitable multiple and arrange that any monomial \(X_i X_j\) has the same coefficient in both of the equations in which it appears.  The quadric \(Q\) is then defined by the polynomial obtained by combining all of these monomials.

We next work out the conditions on the coefficients given by the assumed tangency.  First consider the point \(p_0\).  In the affine chart on \(\P^3\) given by \(X_0 = 1\), this point is the origin, and the three quadrics are given by
\begin{align*}
F_1 &= b_{02}  X_2 + b_{03}  X_3 + b_{23} X_2 X_3 = 0 \\
F_2 &= c_{01}  X_1 + c_{03} X_3 + c_{13} X_1 X_3 = 0 \\
F_3 &= d_{01}  X_1 + d_{02} X_2 + d_{12} X_1 X_2 = 0
\end{align*}

Tangent vectors to these curves at the origin in \(\A^3\) are then given by
\[
(0,b_{03},-b_{02}), \quad (c_{03}, 0, -c_{01} ), \quad (d_{02}, -d_{01}, 0).
\]
The vectors are coplanar if the determinant of the matrix with these as rows vanishes, and and analogous computations at the other three coordinate points yield the four conditions
\begin{align*}
  b_{02} c_{03} d_{01} - b_{03} c_{01} d_{02} &= 0,&
  a_{12} c_{13} d_{01} - a_{13} c_{01} d_{12} &= 0, \\
  a_{12} b_{23} d_{02} - a_{23} b_{02} d_{12} &= 0,&
  a_{13} b_{23} c_{03} - a_{23} b_{03} c_{13} &= 0.
\end{align*}
That the conics are smooth implies that none of the coefficients vanish, and so after multiplying the equations by constants we may assume that the coefficients on the \(X_0 X_1\) and \(X_2 X_3\) terms are already equal, so that \(c_{01} = d_{01} = 1\) and \(a_{23} = b_{23} = 1\).  Our system of equations then becomes:
\begin{align*}
  b_{02} c_{03}  - b_{03}  d_{02} &= 0&
  a_{12} c_{13}  - a_{13}  d_{12} &= 0 \\
  a_{12}  d_{02} -  b_{02} d_{12} &= 0&
  a_{13}  c_{03} -  b_{03} c_{13} &= 0.
\end{align*}
This shows that
\[
\frac{a_{12}}{d_{12}} = \frac{b_{02}}{d_{02}} = \frac{a_{13}}{c_{13}} = \frac{b_{03}}{c_{03}}.
\]
Multiplying the equations \(F_2\) and \(F_3\) by this common value, we obtain multiples of the defining equations which have all corresponding coefficients equal.
\end{proof}

\begin{proof}[{Proof of Theorem~\ref{blowupiscoble}}]

The surface \(S\) can be mapped to \(\P^3\)  by the anticanonical map.  This map contracts the \((-2)\)-curves on \(S\), and the image is a cubic surface with du Val singularities.  The assumption that \(S_\bp\) and \(S_\bq\) are weak del Pezzo implies that none of the blown up points lies on a \((-2)\)-curve, so the four points of \(\bp\) and \(\bq\) are all mapped to smooth points.  The distinctness assumption in (N1) implies that no four of these points have coplanar image.

One direction of the proof is simple.  Suppose that \(S_{\bp\bq}\) is a Coble surface.  Then there exists a quadric \(Q\) which is tangent to \(S\) at the four points \(\bp \cup \bq\).
According to our description of the Bertini involution on a \(2\)-point blow-up of the cubic, the point \(q_1\) is fixed by \(\tau_\bp\) if and only if there exists a conic \(C\) in the plane \(\Pi = \Pi_{\bp q_1}\) such that \(C\) is double on the smooth cubic \(S \cap \Pi\) at these three points.  But there certainly exists such a conic: we can simply take \(C = Q \cap \Pi\), which is tangent at the points since \(Q\) and \(S\) are.  The other fixed point conditions follow in the same way.

Suppose instead that \(\tau_\bp\) fixes the points of \(\bq\) and \(\tau_\bq\) fixes the points of \(\bp\).  Let \(\Pi\) denote the plane through \(\bp\) and \(q_1\).   There exists a plane conic \(C \subset \Pi\) which is double on the plane cubic \(S \cap \Pi\) at \(p_1\), \(p_2\), and \(q_1\).

We claim that \(C\) must be smooth.  If not, then \(C\) is a union of two lines.  There are two possibilities: either (i) \(C\) is the double of a line \(L\) passing through both \(p_1\) and \(p_2\) or (ii) one of the lines \(L\) is tangent to \(S \cap C\) at \(p_1\).  Case (i) is ruled out by (N1), since this would mean that \(q_1\) lies on the line between \(p_1\) and \(p_2\), so that the four points are coplanar.  Case (ii) is ruled by (N2): if \(L\) passes through \(p_2\) or \(q\), this would mean that the tangent plane to \(S\) at \(p_1\) passes through \(p_2\) or \(q\).  If \(L\) misses both these points, then the second line \(L'\) must be tangent to \(S \cap \Pi\) at both \(p_2\) and \(q\), which again contradicts (N2).

Making the same argument for other triples of points,  we conclude that in any plane \(\Pi\) through three of the four points in \(\bp \cup \bq\), there exists a conic smooth \(C\) tangent to \(S \cap \Pi\) at these three points. Three such conics pass through each of the points in \(\bp \cup \bq\), and at each point the three conics have coplanar tangent directions, since all the tangents are contained in the tangent plane to \(S\).

It follows from Lemma~\ref{quadrics} that there is in fact a quadric surface \(Q \subset \P^3\) which is tangent to \(S\) at each of the four points, which shows that \(\abs{-2K_{S_{\bp\bq}}}\) is nonempty.  Since (N1) implies that the four points are not coplanar, \(\abs{-K_{S_{\bp\bq}}}\) is empty, and so the \(S_{\bp\bq}\) is a Coble surface.
\end{proof}

Blowing up two pairs of points on a cubic as in Theorem~\ref{blowupiscoble} yields a Coble surface, and every automorphism has an invariant curve.  We thus proceed to blow up a third pair of points, leading to the following definition.

\begin{definition}
\label{tricobledef}
Let \(S\) be a weak del Pezzo surface of degree \(3\) and suppose that there exist three disjoint pairs of points \(\bp\), \(\bq\), and \(\br\) on \(S\) such that:
\begin{enumerate}
\item[(T1)] \(S_\bp\), \(S_\bq\), and \(S_\br\) are weak del Pezzo surfaces of degree \(1\), with corresponding Bertini involutions \(\tau_\bp\), \(\tau_\bq\) and \(\tau_\br\);
\item[(T2)] \(\tau_\bp\) fixes the points of \(\bq \cup \br\), 
 \(\tau_\bq\) fixes the points of \(\bp \cup \br\), and
 \(\tau_\br\) fixes the points of \(\bp \cup \bq\).
\item[(T3)] Each of the \(4\)-tuples \(\bp \cup \bq\), \(\bp \cup \br\), and \(\bp \cup \bq\) satisfies the nondegeneracy conditions (N1) and (N2).
\end{enumerate}
Then we term the blow-up \(X = S_{\bp  \bq  \br}\) a \emph{tri-Coble surface}.  Notice that \(X\) may be contracted to each of \(S_{\bp  \bq}\), \(S_{\bp  \br}\), and \(S_{\bq  \br}\), which are all Coble surfaces according to Lemma~\ref{blowupiscoble}.  If any one of these three is a simple Coble surface, then we call \(X\) a \emph{simple tri-Coble surface}.
\end{definition}

Suppose that \(S\) is a cubic surface in \(\P^3\).  For \(\tau_\bp\) to fix the points of \(\bq\) and \(\tau_\bq\) to fix the points of \(\bp\) means that there is a quadric \(Q_1\) which is tangent to \(S\) at the four points of \(\bp \cup \bq\).  Similarly, there must exist a quadric \(Q_2\) tangent to \(S\) at \(\bp \cup \br\), and a quadric \(Q_3\) tangent to \(S\) at \(\bq \cup \br\).

\section{No invariant curves}

We will mostly be interested in the composition \(\phi = \tau_\bp \circ \tau_\bq \circ \tau_\br\).  The bi-anticanonical curve on \(S_{\bp  \bq}\) is  invariant under both \(\tau_\bp\) and \(\tau_\bq\), but there are no obvious curves invariant under all three of these involutions, and so it seems reasonable to expect that \(\phi\) does not have any invariant curves at all.

\begin{theorem}
Suppose that \(X = S_{\bp  \bq  \br}\) is a tri-Coble surface.  Then each of the involutions \(\tau_\bp\), \(\tau_\bq\), and \(\tau_\br\) lifts to a biregular involution of \(X\).  Let \(\phi = \tau_\bp \circ \tau_\bq \circ \tau_\br\) be the composition.  Then \(\phi\) is an automorphism of positive entropy.  If \(X\) is a simple tri-Coble surface, then \(\phi^\ast\) does not fix any pseudoeffective class.  In particular, \(\phi\) has no periodic curves.
\end{theorem}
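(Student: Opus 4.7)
First, each $\tau_\bx$ fixes the four points of $(\bp\cup\bq\cup\br)\setminus\bx$ pointwise by condition (T2), so it lifts canonically to a biregular involution $\tilde\tau_\bx$ of the blow-up $X$. For positive entropy, I would write the action of each $\tilde\tau_\bx^*$ on $N^1(X)\otimes\R$ explicitly. The Bertini involution on the degree-$1$ weak del Pezzo $S_\bx$ satisfies $\tau_\bx^* D = -D + 2(D\cdot K_{S_\bx})K_{S_\bx}$, since $\tau_\bx^* K_{S_\bx}=K_{S_\bx}$, $\tau_\bx^* = -\mathrm{id}$ on $K_{S_\bx}^\perp$, and $K_{S_\bx}^2 = 1$. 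Pulled back along $X\to S_\bx$, and combined with the identity action on the four additional exceptional classes above $(\bp\cup\bq\cup\br)\setminus\bx$, this determines the full $13\times 13$ matrix of $\tilde\tau_\bx^*$. Multiplying these produces $\phi^* = \tilde\tau_\br^*\tilde\tau_\bq^*\tilde\tau_\bp^*$, and a finite computation of its characteristic polynomial exhibits a real root $\lambda > 1$, establishing positive entropy.

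For the claim that $\phi^*$ fixes no nonzero pseudoeffective class, suppose $D$ is pseudoeffective with $\phi^*D = D$ and take its Zariski decomposition $D = P + N$. By uniqueness of the decomposition, $\phi^*P = P$ and $\phi^*N = N$. The nef summand vanishes by the standard spectral argument: since $\phi^*$ is a lattice isometry with spectral radius $\lambda > 1$, the leading eigenvectors $\eta^\pm$ for $\lambda^{\pm 1}$ are isotropic, and the invariant class $P$ is orthogonal to both (eigenspaces for eigenvalues whose product is not $1$ are automatically orthogonal). Hence $P$ lies in the negative-definite orthogonal complement of the hyperbolic plane $\langle\eta^+,\eta^-\rangle$, so $P^2 \leq 0$; combined with $P^2 \geq 0$ from nefness, $P = 0$.

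So $D = N$ is effective with negative-definite support, and each component is $\phi$-periodic. The simple tri-Coble hypothesis enters here: without loss of generality $S_{\bp\bq}$ is a simple Coble surface, so $\abs{-2K_{S_{\bp\bq}}}$ consists of a unique irreducible $(-4)$-curve, whose strict transform $\tilde A$ on $X$ has the explicit class $A = -2K_X + 2F_{r_1} + 2F_{r_2}$. For any $\phi$-periodic irreducible curve $C$, applying the spectral argument of the previous paragraph to a suitable power of $\phi$ forces $[C]\cdot\eta^\pm = 0$, using that $\eta^\pm$ are nef (as limits of pullbacks of ample classes) so the effective $[C]$ pairs non-negatively with them, with equality forced by orthogonality to the $\lambda^{\pm n}$-eigenspaces. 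I would then enumerate the irreducible negative curves on $X$ whose classes meet this constraint — $\tilde A$ itself and its counterparts on the other two Coble surfaces, the exceptional divisors of $X\to \P^2$, and strict transforms of distinguished lines and conics on the cubic $S$ — and verify directly from the explicit $\phi^*$-matrix that no such class is fixed by any iterate $(\phi^*)^n$. The principal obstacle is this final case analysis, where the explicit spectral data must combine cleanly with the tri-Coble geometry to exclude every candidate periodic component; the simple-Coble hypothesis is essential here, since it pins down the rigid $(-4)$-curve $\tilde A$ that anchors the argument.
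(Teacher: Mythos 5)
Your treatment of the lifting of the involutions and the positive-entropy computation matches the paper's: both come down to assembling the explicit $13\times 13$ matrix of $\phi^\ast$ from the classical Bertini matrix $\tau^\ast D = 2(D\cdot K)K - D$ padded by the identity on the complementary exceptional classes, and reading off the characteristic polynomial. Your Zariski-decomposition argument that the nef part $P$ of a $\phi^\ast$-fixed pseudoeffective class vanishes is also correct.

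The gap is in the final step, and you acknowledge it yourself. After reducing to showing that $N$ vanishes, you propose to enumerate the irreducible negative curves on $X$ that are orthogonal to $\eta^\pm$ and check, case by case and from the explicit matrix, that none is fixed by any power of $\phi^\ast$. This enumeration is neither carried out nor clearly finite in a way you have argued, and it is also unnecessary, because you have already computed the data that short-circuits it. The characteristic polynomial is $\chi_{\phi^\ast}(t) = (t-1)(t+1)^{10}(t^2-110t+1)$, so $t=1$ is a \emph{simple} root; since $\phi^\ast$ always fixes $K_X$, the $1$-eigenspace is exactly $\langle K_X \rangle$. Hence any $\phi^\ast$-fixed class is a scalar multiple of $K_X$, and the whole question collapses to the non-pseudoeffectivity of $\pm K_X$. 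For $K_X$ this is automatic on a rational surface; for $-K_X$ the paper proves it (Corollary~\ref{nopluriantis}) using the simple tri-Coble hypothesis: the rigid smooth $(-4)$-curve in $\abs{-2K_{S_{\bp\bq}}}$ together with Lemmas~\ref{surfacemain} and~\ref{blowuppseff} shows $-2K_X$, and hence $-K_X$, is not pseudoeffective. So the simple-Coble hypothesis is indeed what pins down the $(-4)$-curve, as you suspected, but it enters via this direct non-pseudoeffectivity argument rather than via an enumeration of periodic curves. Once you know the $1$-eigenspace is one-dimensional, your entire Zariski-decomposition paragraph and the proposed case analysis can be discarded, and the "no periodic curves" conclusion follows immediately because the union of a periodic orbit of curves is a $\phi$-invariant effective divisor.
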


\begin{proof}

First, observe that if \(C\) is a \(\phi\)-periodic curve, then \(\bigcup_n \phi^n(C)\) is a \(\phi\)-invariant (albeit reducible) curve, and so it suffices to show that there is no invariant curve.

The action of the Bertini involution on \(N^1(S)\) for a degree \(1\) weak del Pezzo surface was known classically~\cite{dolgachevclassical}.  If \(S\) is presented as a blow-up of \(\P^2\) at eight points with exceptional divisors \(E_1,\ldots,E_8\), then with respect to the basis \(H\), \(E_1\),\dots,\(E_8\):
\[
\tau^\ast = \left(\begin{array}{rrrrrrrrr}
17 & 6 & 6 & 6 & 6 & 6 & 6 & 6 & 6 \\
-6 & -3 & -2 & -2 & -2 & -2 & -2 & -2 & -2 \\
-6 & -2 & -3 & -2 & -2 & -2 & -2 & -2 & -2 \\
-6 & -2 & -2 & -3 & -2 & -2 & -2 & -2 & -2 \\
-6 & -2 & -2 & -2 & -3 & -2 & -2 & -2 & -2 \\
-6 & -2 & -2 & -2 & -2 & -3 & -2 & -2 & -2 \\
-6 & -2 & -2 & -2 & -2 & -2 & -3 & -2 & -2 \\
-6 & -2 & -2 & -2 & -2 & -2 & -2 & -3 & -2 \\
-6 & -2 & -2 & -2 & -2 & -2 & -2 & -2 & -3
\end{array}\right).
\]

The induced action of \(\tau_\bp\) on a tri-Coble surface is obtained by appending a \(4 \times 4\) identity matrix to the matrix \(\tau^\ast\), corresponding to the \(4\) exceptional divisors above the points \(\bq \cup \br\) which are invariant under \(\tau_\bp\).  The matrices for the other involutions are computed analogously, and multiplying all three together we obtain
\[
\phi^\ast = \left(\begin{array}{rrrrrrrrrrrrr}
377 & 126 & 126 & 126 & 126 & 126 & 126 & 150 & 150 & 30 & 30 & 6 & 6 \\
-126 & -43 & -42 & -42 & -42 & -42 & -42 & -50 & -50 & -10 & -10 & -2 & -2 \\
-126 & -42 & -43 & -42 & -42 & -42 & -42 & -50 & -50 & -10 & -10 & -2 & -2 \\
-126 & -42 & -42 & -43 & -42 & -42 & -42 & -50 & -50 & -10 & -10 & -2 & -2 \\
-126 & -42 & -42 & -42 & -43 & -42 & -42 & -50 & -50 & -10 & -10 & -2 & -2 \\
-126 & -42 & -42 & -42 & -42 & -43 & -42 & -50 & -50 & -10 & -10 & -2 & -2 \\
-126 & -42 & -42 & -42 & -42 & -42 & -43 & -50 & -50 & -10 & -10 & -2 & -2 \\
-6 & -2 & -2 & -2 & -2 & -2 & -2 & -3 & -2 & 0 & 0 & 0 & 0 \\
-6 & -2 & -2 & -2 & -2 & -2 & -2 & -2 & -3 & 0 & 0 & 0 & 0 \\
-30 & -10 & -10 & -10 & -10 & -10 & -10 & -12 & -12 & -3 & -2 & 0 & 0 \\
-30 & -10 & -10 & -10 & -10 & -10 & -10 & -12 & -12 & -2 & -3 & 0 & 0 \\
-150 & -50 & -50 & -50 & -50 & -50 & -50 & -60 & -60 & -12 & -12 & -3 & -2 \\
-150 & -50 & -50 & -50 & -50 & -50 & -50 & -60 & -60 & -12 & -12 & -2 & -3
\end{array}\right).
\]

A direct calculation shows that the characteristic polynomial is
\[
\chi_{\phi^\ast}(t) = (t-1)(t+1)^{10}(t^2-110t+1),
\]
and so the first dynamical degree is \(\lambda_1(\phi) = 55+12\sqrt{21} \approx 109.99\) and \(\phi\) has positive entropy.

Moreover, the \(1\)-eigenspace of \(\phi^\ast\) is \(1\)-dimensional, spanned by the canonical class \(K_X\).  Consequently, the only possible \(\phi\)-invariant curve would be pluri-anticanonical, and to prove that \(\tau\) admits no periodic curve, it suffices to show that \(\abs{-mK_X}\) is not effective for any \(m > 0\).   This is done in Corollary~\ref{nopluriantis} below.
\end{proof}

In fact, we will show that the anticanonical class on a simple tri-Coble surface \(X\) is not even pseudoeffective, (i.e.\ numerically a limit of effective classes), which implies that \(\abs{-mK_X }\) is empty for every \(m>0\). This is straightforward: the Coble surface \(S_{\bp\bq}\) has anti-bicanonical class represented by an irreducible curve of negative self-intersection.  A anti-bicanonical curve on \(X = S_{\bp\bq\br}\) would correspond to a member of \(\abs{-2K_{S_{\bp\bq}}}\) with nodes at the two points of \(\br\).  But this linear system contains only a single, smooth, rigid curve: it can not be deformed to have nodes at the points \(\br\).  This already shows that \(\abs{-2K_X}\) is not effective, and the fact that it is not pseudoeffective is an easy extension of the argument.

\begin{lemma}
\label{surfacemain}
Suppose that \(X\) is a smooth projective surface containing an irreducible curve \(C_1\) with \(C_1^2 < 0\). Let \(C_2\) be another curve satisfying \(C_1 \cdot C_2 \geq 0\).  Then for any \(\epsilon > 0\), the class \(C_1 - \epsilon C_2\) is not pseudoeffective.
\end{lemma}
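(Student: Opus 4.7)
I would argue by contradiction, using the Zariski decomposition on the surface $X$. Suppose $C_1 - \epsilon C_2$ were pseudoeffective. By Zariski's theorem for pseudoeffective $\R$-divisors on a smooth projective surface, there is a decomposition
$$C_1 - \epsilon C_2 = P + N,$$
where $P$ is a nef $\R$-divisor, $N = \sum_i n_i N_i$ is an effective $\R$-divisor with irreducible components $N_i$ and coefficients $n_i \geq 0$, and the intersection matrix $(N_i \cdot N_j)$ is negative definite.

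The first step is to intersect the identity above with $C_1$. By hypothesis $(C_1 - \epsilon C_2) \cdot C_1 = C_1^2 - \epsilon(C_1 \cdot C_2) < 0$, while $P \cdot C_1 \geq 0$ since $P$ is nef and $C_1$ is effective. Hence $N \cdot C_1 < 0$. Because distinct irreducible curves on a smooth surface meet nonnegatively, this forces $C_1$ itself to appear among the $N_i$, say with coefficient $n_1 > 0$. Writing $N = n_1 C_1 + N'$ with $N'$ effective and not containing $C_1$, the decomposition rearranges to
$$(1 - n_1) C_1 = P + N' + \epsilon C_2.$$

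Next I would intersect this new identity first with $C_1$ and then with an ample class $A$. Each of $P \cdot C_1$, $N' \cdot C_1$, and $\epsilon(C_1 \cdot C_2)$ is nonnegative, so $(1 - n_1) C_1^2 \geq 0$, and since $C_1^2 < 0$ this forces $n_1 \geq 1$. Now intersecting with $A$ gives $(1 - n_1) C_1 \cdot A \leq 0$ on the left, but on the right $P \cdot A + N' \cdot A + \epsilon(C_2 \cdot A) > 0$, since $C_2$ is a nonzero effective divisor and therefore $C_2 \cdot A > 0$. This is the desired contradiction.

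The sole substantive input is the existence of a Zariski decomposition for pseudoeffective $\R$-divisor classes on a smooth projective surface, which is classical. I do not anticipate a real obstacle: the lemma is essentially the statement that a negative curve $C_1$ spans an extremal ray of the pseudoeffective cone, so any perturbation $-\epsilon C_2$ with $C_1 \cdot C_2 \geq 0$ pushes strictly outside that cone.
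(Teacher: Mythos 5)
Your argument is correct, and it takes a genuinely different route from the paper. You invoke the Zariski decomposition of a pseudoeffective \(\mathbb{R}\)-divisor into its nef and negative parts, peel \(C_1\) out of the negative part, and then get a clean contradiction by pairing with \(C_1\) and with an ample class. The paper avoids Zariski decomposition entirely: it works directly with the definition of pseudoeffectivity, writing an effective representative of \(C_1 - \epsilon C_2 + \delta A\) as \(rC_1 + F\) with \(F\) effective and not containing \(C_1\), and then runs a quantitative version of the same two intersection estimates to show \(F \cdot A < 0\) once \(\delta\) is small. Your version is a bit more conceptual — it makes transparent that \(C_1\) spans an extremal ray of the pseudoeffective cone and that subtracting \(\epsilon C_2\) leaves the cone — at the cost of invoking a heavier (though certainly classical) tool. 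The paper's version is more elementary and self-contained, needing only nonnegativity of intersection numbers of distinct effective curves and an ample class, which fits the expository tone of the note. One small remark on your write-up: the initial step showing \(C_1\) appears in \(N\) with \(n_1 > 0\) is actually redundant — if \(n_1 = 0\) the identity \(C_1 = P + N + \epsilon C_2\) already gives \(C_1^2 \geq 0\) upon pairing with \(C_1\), so you could pass directly to the rearranged identity and the two final intersections.
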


\begin{proof}
Let \(A\) be an ample class on \(X\).  We show first that any effective representative of a class \(C_1 -\epsilon C_2 + \delta A\) for small \(\delta\) must have \(C_1\) in its support with multiplicity close to \(1\).  Indeed, suppose that \(C_1 + \epsilon A + \delta A \equiv r C_1 + F\), where \(F\) is an effective divisor whose support does not contain \(C_1\).  Then
\(F = (1-r)C_1 - \epsilon C_2 + \delta A\).  Intersecting both sides with \(C_1\), we obtain
\begin{align*}
  F \cdot C_1 &= (1-r)C_1^2 - \epsilon C_1 \cdot C_2 + \delta A \cdot C_1 \\
  0 &\leq  (1-r)C_1^2 + \delta A \cdot C_1 \\
  1-r &\leq -\delta \frac{A \cdot C_1}{C_1^2}.  
\end{align*}

The second line follows from the fact that \(F \cdot C_1 \geq 0\) while the third requires \(C_1^2 < 0\).  Now compute
\begin{align*}
  F \cdot A &= (1-r)C_1 \cdot A  - \epsilon C_2 \cdot A + \delta A^2 \\
            &\leq \left(  -\delta \frac{A \cdot C_1}{C_1^2} \right) C_1 \cdot A - \epsilon C_2 \cdot A + \delta A^2 \\
  &= \delta \left(A^2  - \frac{A \cdot C_1}{C_1^2} \right) - \epsilon C_2 \cdot A.
\end{align*}

If \(\epsilon\) is fixed and \(\delta <  \left(A^2  - \frac{A \cdot C_1}{C_1^2} \right)^{-1}(A \cdot C_2)\) is taken sufficiently small, the right side is evidently negative, so that \(F \cdot A < 0\), which is impossible if \(F\) is effective.  It follows that \(C_1 - \epsilon C_2 + \delta A\) is not effective for sufficiently small \(\delta\), so that \(C_1 - \epsilon C_2\) is not pseudoeffective.
\end{proof}

\begin{lemma}
\label{blowuppseff}
Suppose that \(S\) is a smooth surface and \(C \subset S\) is a smooth, irreducible curve with \(C^2 < 0\).  Let \(p\) any point of \(S\), and let \(\pi : S^\prime \to S\) be the blow-up of \(S\) at \(p\), with exceptional divisor \(E\).  Then the class \(\pi^\ast C - \epsilon E\) is not pseudoeffective for any \(\epsilon > 1\).
\end{lemma}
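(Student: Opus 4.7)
The plan is to reduce to Lemma~\ref{surfacemain} by expressing \(\pi^\ast C - \epsilon E\) in terms of the strict transform \(\widetilde{C}\) of \(C\) on \(S'\), and then splitting into the two cases \(p \notin C\) and \(p \in C\). In both cases the strict transform \(\widetilde{C}\) is still smooth and irreducible with negative self-intersection, so it will serve as the curve \(C_1\) in Lemma~\ref{surfacemain}, with \(E\) playing the role of \(C_2\).

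First I would handle the easy case \(p \notin C\). Here \(\pi^\ast C = \widetilde{C}\), so \(\widetilde{C}^2 = C^2 < 0\) and \(\widetilde{C} \cdot E = 0 \geq 0\). Applying Lemma~\ref{surfacemain} with \(C_1 = \widetilde{C}\), \(C_2 = E\) immediately shows that \(\widetilde{C} - \epsilon E = \pi^\ast C - \epsilon E\) is not pseudoeffective for \emph{any} \(\epsilon > 0\), which is stronger than what is claimed.

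The remaining case is \(p \in C\), where the hypothesis \(\epsilon > 1\) is essential. Since \(C\) is smooth at \(p\), we have the standard decomposition \(\pi^\ast C = \widetilde{C} + E\), and therefore
\[
\pi^\ast C - \epsilon E = \widetilde{C} - (\epsilon - 1) E.
\]
Now \(\widetilde{C}^2 = C^2 - 1 < 0\) and \(\widetilde{C} \cdot E = 1 \geq 0\), so Lemma~\ref{surfacemain} applies with \(C_1 = \widetilde{C}\), \(C_2 = E\), and the parameter \(\epsilon - 1 > 0\), giving that \(\widetilde{C} - (\epsilon - 1) E\) is not pseudoeffective.

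No step here looks genuinely difficult; the only thing to be careful about is the bookkeeping with \(\pi^\ast C\) versus \(\widetilde{C}\), which is precisely why the bound \(\epsilon > 1\) (rather than \(\epsilon > 0\)) appears in the statement: one unit of \(E\) is absorbed into \(\pi^\ast C\) whenever \(p\) is a smooth point of \(C\).
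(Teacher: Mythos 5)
Your proof is correct and takes essentially the same route as the paper: both reduce to Lemma~\ref{surfacemain} by substituting the strict transform of \(C\) for \(C_1\) and \(E\) for \(C_2\), with the two cases \(p \in C\) and \(p \notin C\) tracked via the class \(\pi^\ast C - bE\) of the strict transform. Your version simply spells out the case split and the arithmetic that the paper leaves implicit.
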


\begin{proof}
The strict transform \(C'\) of \(C\) on \(S^\prime\) has has class \(\pi^\ast C - b E\), where \(b\) is either \(0\) or \(1\) (depending on whether \(p\) lies on \(C\)).  Then \(C'\) is a smooth, irreducible curve with \((C')^2 < 0\), and the claim follows from Lemma~\ref{surfacemain} taking \(C_1 = C'\) and \(C_2 = E\).
\end{proof}

\begin{corollary}
\label{nopluriantis}
Let \(X\) be a simple tri-Coble surface.  Then \(-K_X\) is not pseudoeffective, and every linear system \(\abs{-mK_X}\) is empty.
\end{corollary}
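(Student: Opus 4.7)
The plan is to prove that $-2K_X$ already fails to be pseudoeffective, which implies both conclusions at once: if $-K_X$ were pseudoeffective then so would be $2(-K_X) = -2K_X$; and if $\abs{-mK_X}$ were nonempty for some $m>0$ then $-mK_X$ would be effective, making $-K_X = \tfrac{1}{m}(-mK_X)$ pseudoeffective.

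To exhibit $-2K_X$ as non-pseudoeffective, I would exploit the simple Coble hypothesis. By Definition \ref{tricobledef}, at least one of the three contractions, say $\pi : X \to S_{\bp\bq}$, has simple target, so $\abs{-2K_{S_{\bp\bq}}}$ is represented by a smooth rational curve $C$ with $C^2 = -4$. Using $K_X = \pi^\ast K_{S_{\bp\bq}} + E_{r_1} + E_{r_2}$, and writing $\pi^\ast C = C' + b_1 E_{r_1} + b_2 E_{r_2}$ where $C'$ denotes the strict transform and $b_i \in \set{0,1}$ is the multiplicity of $C$ at $r_i$, I obtain
\[
-2K_X \;=\; \pi^\ast C - 2E_{r_1} - 2E_{r_2} \;=\; C' - (2-b_1)E_{r_1} - (2-b_2)E_{r_2}.
\]

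Now $C'$ is smooth and irreducible, with $(C')^2 = -4 - b_1^2 - b_2^2 < 0$, and $C' \cdot E_{r_i} = b_i \geq 0$. Lemma~\ref{surfacemain} therefore applies with $C_1 = C'$, $C_2 = E_{r_1}$, and $\epsilon = 2 - b_1 > 0$, giving that $C' - (2-b_1)E_{r_1}$ is not pseudoeffective. Since $(2-b_2)E_{r_2}$ is effective and subtracting an effective class from a non-pseudoeffective class preserves non-pseudoeffectiveness (if $D - E$ were pseudoeffective and $E$ effective, then $D = (D-E) + E$ would be pseudoeffective), we conclude $-2K_X$ is not pseudoeffective, and the corollary follows.

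I do not anticipate any serious obstacle: the only small verifications are that $C'$ retains negative self-intersection after the two blow-ups and that $C' \cdot E_{r_i} \geq 0$, both of which are immediate from smoothness of $C$ and $b_i \in \set{0,1}$. All substantive work has already been isolated into Lemma~\ref{surfacemain}.
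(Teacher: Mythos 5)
Your proof is correct and follows essentially the same route as the paper: both hinge on the smooth rational $(-4)$-curve $C \in \abs{-2K_{S_{\bp\bq}}}$ guaranteed by the simplicity hypothesis, and both reduce to Lemma~\ref{surfacemain}. The only difference is cosmetic: the paper invokes the single-blow-up Lemma~\ref{blowuppseff} for the two-point blow-up $\pi\colon X \to S_{\bp\bq}$ without spelling out the iteration, whereas you apply Lemma~\ref{surfacemain} directly on $X$ to the strict transform $C'$ and make the ``subtracting an effective class preserves non-pseudoeffectiveness'' step explicit, which is a slightly more careful rendering of the same argument.
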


\begin{proof}
The simplicity hypothesis means that \(X\) is obtained by blowing up two points on \(S_{\bp\bq}\), where \(S_{\bp\bq}\) has a unique irreducible rational curve in \(\abs{-2K_{S_{\bp\bq}}}\).  From the adjunction theorem, this curve is smooth of self-intersection \(-4\), and since \(-2K_X = \pi^\ast(-2K_{S_{\bp\bq}}) - 2(E_1 + E_2)\), it follows from Lemma~\ref{blowuppseff} that this class is not pseudoeffective.
\end{proof}

\section{The existence of tri-Coble surfaces}
\label{existence}

There is still one piece missing: we must prove that tri-Coble surfaces actually exist.  The proof is by direct construction. By definition, a tri-Coble surface is a \(6\)-point blow-up of  a smooth degree-\(3\) weak del Pezzo surface \(S\).  To construct a tri-Coble surface, we will take \(S\) to be  a particular cubic surface in \(\P^3\) and then blow up three pairs of points on \(S\), which arise as the tangency points \(S\) with three quadric surfaces, as described following Definition~\ref{tricobledef}.

To actually find such configurations, it is helpful to invert our perspective.  Rather than fixing a cubic surface \(S \subset \P^3\) and searching for six points \(\bp,\bq,\br\) such that there exist three quadrics each tangent at four of the six, we begin with three quadrics \(Q_1,Q_2,Q_3 \subset \P^3\) such that each pair of quadrics are tangent at two points.  Only after fixing the quadrics and the six tangency points do we construct the cubic surface.  For a cubic surface to pass through a point and have a given tangent plane there imposes \(3\) conditions.  Since the space of cubic surfaces has dimension \(19\), we expect that given six points and prescribed tangent planes at those points, there should exist a \(19-6 \times 3 = 1\)-dimensional pencil of satisfactory cubic surfaces.  

We must then check several non-degeneracy conditions to ensure that our surface is actually a simple tri-Coble surface:
\begin{enumerate}
\item \(S\) is smooth and irreducible;
\item the blow-ups \(S_\bp\), \(S_\bq\), \(S_\br\) are weak del Pezzo;
\item no four of the six points are coplanar;
\item the intersection of \(S\) with any plane through any three of the six points is a smooth cubic;
\item the tangent plane to \(S\) at any point does not pass through any other points;
\item the intersections \(Q_i \cap S_{\bp\bq\br}\) are smooth and irreducible.
\end{enumerate}

(Here (1) and (2) show that (T1) is satisfied.  (3) and (4) together imply that (N1) holds as required by (T2), while (5) checks (N2).  At last, (6) implies that the surface is actually a \emph{simple} tri-Coble surface, so that one of the three blow-down Coble surfaces has a smooth bi-anticanonical curve.)

With a bit of computer-aided experimentation, this strategy readily yields examples where the quadrics and the tangency points are all defined over \(\Q\).  Consider the following three quadric surfaces in \(\P^3\), with coordinates \((w:x:y:z)\):
\begin{align*}
Q_1 &: 59 w^{2} + x^{2} + y^{2} - 20 w z + z^{2} = 0,\\
Q_2 &: - 9w^{2} - x^{2} + 9y^{2} +  z^{2} = 0,\\
Q_3 &: - 9w^{2} - x^{2} + 9y^{2} - 6 z^{2} = 0,
\end{align*}
as well as the six points
\begin{align*}
p_1 &= ( 1 : 4 : 0 : 5 ), &  q_1 &= ( 1 : 0 : 5 : 6 ), & r_1 &= ( 12 : -15 : 13 : 0 ), \\
p_2 &= ( 1 : -4 : 0 : 5 ), & q_2 &= ( 1 : 0 : -5 : 6 ), & r_2 &= ( -3 : 12 : 5 : 0 ).
\end{align*}

The following table gives the tangent spaces \(T_{p_i} Q_j\) at the six points, using the coordinates from the dual \(\P^3\).  An ``\ex'' in a column indicates that the surface does not pass through the point.
\begin{align*}
\begin{array}{c|c|c|c|}
& Q_1 & Q_2 & Q_3 \\\hline
p_1 & ( -9 : -4 : 0 : 5 ) & ( -9 : -4 : 0 : 5 ) & \ex \\\hline
p_2 & ( -9 : 4 : 0 : 5 ) & ( -9 : 4 : 0 : 5 ) & \ex \\\hline
q_1 & ( 1 : 0 : -5 : 4 ) & \ex & ( 1 : 0 : -5 : 4 ) \\\hline
q_2 & ( 1 : 0 : 5 : 4 ) & \ex & ( 1 : 0 : 5 : 4 ) \\\hline
r_1 & \ex & ( -36 : 5 : 39 : 0 ) & ( -36 : 5 : 39 : 0 ) \\\hline
r_2 & \ex & ( 9 : -4 : 15 : 0 ) & ( 9 : -4 : 15 : 0 ) \\\hline
\end{array}
\end{align*}

An examination of the table shows that these quadrics satisfy the required pointwise tangency conditions.
It is then an exercise in linear algebra to write down a cubic surface with the prescribed tangent planes at all six of these points.  There is in fact a \(1\)-dimensional family of such cubics, with one such surface \(S\) defined by the vanishing of the equation
\begin{align*}
  F &= 9963 w^{3} + 56187 w^{2} x + 27707 w x^{2} + 3018 x^{3} + 12069 w^{2} y + 366 x^{2} y + 11457 w y^{2} \\ \tag{$\ast$} &\qquad  + 3213 x y^{2} + 351 y^{3} - 5358 w^{2} z - 11610 w x z - 3002 x^{2} z - 4140 w y z + 18 y^{2} z  \\ &\qquad- 7643 w z^{2} - 1857 x z^{2} + 111 y z^{2} + 38 z^{3}.
\end{align*}

\begin{remark}
An easy dimension count suggests that on a general cubic \(S\) it should be possible to find pairs \(\bp\), \(\bq\), and \(\br\) so that the blow-up \(S_{\bp\bq\br}\) is a tri-Coble surface.  However, actually finding such points, even on the Fermat cubic, leads to equations with no obvious solutions. The approach followed in this section seems to be more straightforward, even if it leads to a somewhat cumbersome cubic surface.
\end{remark}

To show \(S_{\bp\bq\br}\) is tri-Coble surface, it is still necessary to verify non-degeneracy conditions (1)-(6) ensuring that the automorphisms \(\tau_\bp\), \(\tau_\bq\), and \(\tau_\br\) all well-defined and that their composition has no periodic curves.
It seems unsurprising that these non-degeneracy conditions hold, but since the cubic \(S\) is constructed as the solution to an interpolation problem, it is difficult to directly control its geometry.

Although these verifications are tedious to carry out by hand, they are routine on a computer.  Notice that checking (2) does not actually necessitate the relatively difficult task of finding the \(27\) lines on \(S\): it is enough to check that the particular lines connecting pairs of points are not contained in \(S\). 
The nondegeneracy properties are checked for the specific cubic ($\ast$) in the attached Sage script.  Note that all of these are open conditions, and so the general cubic in the pencil constructed earlier also satisfies these conditions.

\section{Examples over \(\Fpb\)}
\label{poschar}

In general it seems difficult to find configurations \(\bp\) and \(\bq\) with positive-entropy automorphisms \(\phi_\bp\) and \(\phi_\bq\) for which \(\bq \setminus \bp\) is invariant under \(\phi_\bp\) and \(\bp \setminus \bq\) is invariant under \(\phi_\bq\).  However, we observe now that over the field \(k = \Fpb\), the algebraic closure of a finite field, essentially any configuration will do.   Most constructions of automorphisms of rational surfaces, such as those of Bedford--Kim~\cite{bkfirst} or McMullen~\cite{mcmullen}, work perfectly well over these fields, although one must exercise some care that the characteristic is large enough to ensure that the configurations \(\bp\) constructed there actually consist of sets of distinct points.

\begin{theorem}
\label{poschariterate}
Let \(\bp\) and \(\bq\) be two configurations of points in \(\P^2\) for which there exist automorphisms \(\phi_\bp : S_\bp \to S_\bp\) and \(\phi_\bq : S_\bq \to S_\bq\).  Then there exist positive integers \(m\) and \(n\) so that \(\phi_\bp^m \circ \phi_\bq^n\) lifts to an automorphism of \(S_{\bp \bq}\).
\end{theorem}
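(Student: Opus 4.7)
The plan is to use the fact that any automorphism defined over a finite field acts as a permutation on the (finite) set of rational points over that field, so a sufficiently high power acts as the identity on any prescribed finite subset of $\Fpb$-rational points.

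More concretely, since $\bp$, $\bq$, $\phi_\bp$, and $\phi_\bq$ are each specified by finitely many elements of $\Fpb$, I first choose a finite subfield $\mathbb{F}_q \subset \Fpb$ over which all of this data is defined. Then $S_\bp$ and $S_\bq$ are defined over $\mathbb{F}_q$, and, viewing the points of $\bq \setminus \bp$ as points of $S_\bp$ via the canonical isomorphism between $S_\bp$ and $\P^2$ away from the exceptional divisors above $\bp$, we have $\bq \setminus \bp \subset S_\bp(\mathbb{F}_q)$; likewise $\bp \setminus \bq \subset S_\bq(\mathbb{F}_q)$.

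Next I would observe that because $S_\bp$ is a projective surface over $\mathbb{F}_q$, the set $S_\bp(\mathbb{F}_q)$ is finite, and the $\mathbb{F}_q$-automorphism $\phi_\bp$ restricts to a permutation of it. Letting $m$ denote the order of this permutation, $\phi_\bp^m$ acts as the identity on $S_\bp(\mathbb{F}_q)$, so in particular it fixes every point of $\bq \setminus \bp$. An identical argument produces an integer $n$ such that $\phi_\bq^n$ fixes every point of $\bp \setminus \bq$.

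Finally, by the lifting criterion recorded in the introduction, these pointwise-fixing conditions imply that both $\phi_\bp^m$ and $\phi_\bq^n$ extend to biregular automorphisms of the common resolution $S_{\bp\bq}$, and hence so does their composition $\phi_\bp^m \circ \phi_\bq^n$. There is essentially no obstacle here: the theorem amounts to the pigeonhole principle applied to the finite set $S_\bp(\mathbb{F}_q)$, and the only reason the analogous statement fails over $\mathbb{C}$ is that $S_\bp(\mathbb{C})$ is not finite.
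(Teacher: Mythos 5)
Your proof is correct and follows exactly the same route as the paper: spread out to a finite field $\mathbb{F}_q$ over which all the data is defined, use finiteness of $S_\bp(\mathbb{F}_q)$ to find iterates of $\phi_\bp$ and $\phi_\bq$ that fix the relevant points, and then apply the lifting criterion from the introduction. The only cosmetic difference is that you take $m$ to be the full order of the permutation of $S_\bp(\mathbb{F}_q)$ rather than just some iterate fixing $\bq\setminus\bp$, which is harmless.
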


\begin{proof}
\(\phi_\bp : S_\bp \to S_\bp\) is defined over an algebraically closed field \(\Fpb\).  There exists a finite field \(\F_q\) such that all the points of \(\bp\) and \(\bq\) and the maps \(\phi_\bp\) and \(\phi_\bq\) are all defined over \(\F_q\).

Now, the number of \(\F_q\)-points on \(S_\bp\) is finite, and these points are permuted by \(\phi_\bp\), so some iterate \(\phi_\bp^m\) fixes all the points of \(\bq \setminus \bp\).  Similarly, an iterate \(\phi_\bq^n\) fixes all the points in \(\bp \setminus \bq\).  Then the composition \(\phi_\bp^m \circ \phi_\bq^n\) of these two iterates lifts to an automorphism of the blow-up \(S_{\bp\bq}\).
\end{proof}

Let \(S_q^{\text{all}}\) denote the blow-up of \(\P^2_{\Fpb}\) at all \(\F_q\)-points for some prime power \(q = p^s\).  The proof of Theorem~\ref{poschariterate} shows that if \(\bp\) is a configuration defined over \(\F_q\), then any automorphism \(\phi : S_\bp \to S_\bp\) has an iterate that lifts to an automorphism of \(S_q^{\text{all}}\), so it seems reasonable to expect that this group is quite large once \(q\) is sufficiently big.

\begin{question}
What can be said about the group \(\Aut(S_q^{\text{all}})\)?  Is it finitely generated?
\end{question}

\bibliographystyle{plain}
\bibliography{refs}

\section{Acknowledgements}

This work was supported by NSF grant DMS-1912476 and a Sloan Research Fellowship.  The \texttt{SageMath} system was invaluable for computations; in particular, the cubic given by ($\ast$) was found using the shortest vector algorithm provided by the \texttt{IntegerLattice} package.

\end{document}